\def\ep{\varepsilon}
\def\lip{\hskip0.02cm{\rm Lip}\hskip0.01cm}
\def\dist{\hskip0.02cm{\rm dist}\hskip0.01cm}
\def\supp{\hskip0.02cm{\rm supp}\hskip0.01cm}
\newtheorem{theorem}{Theorem}
\newenvironment{remark}[1]{\medskip\par\noindent{\bf #1.}\rm}
{\medskip\par\noindent}
\newenvironment{proof}[1]{\medskip\par\noindent{\sc #1.\ }}
{~\rule{0.5em}{0.5em}\medskip\par}
\begin{document}

\title{\LARGE Expansion properties of metric spaces not admitting a coarse embedding into a Hilbert space}

\author{M.~I.~Ostrovskii\\
Department of Mathematics and Computer Science\\
St. John's University\\
8000 Utopia Parkway\\
Queens, NY 11439, USA\\
e-mail: {\tt ostrovsm@stjohns.edu}}

\date{\today}
\maketitle
\begin{large}

\noindent{\bf Abstract.} The main purpose of the paper is to find
some expansion properties of locally finite metric spaces which do
not embed coarsely into a Hilbert space. The obtained result is
used to show that infinite locally finite graphs excluding a minor
embed coarsely into a Hilbert space. In an appendix a direct proof
of the latter result is given.
\medskip

\noindent{\bf 2000 Mathematics Subject Classification:} Primary:
46B20; Secondary: 05C12, 54E35

A metric space $(M,d_M)$ is called {\it locally finite} if all
balls in it have finitely many elements. We say that $(M,d_M)$ has
{\it bounded geometry} if for each $r>0$ there is $U(r)<\infty$
such that each ball of radius $r$ in $M$ has at most $U(r)$
elements. Let $A$ and $B$ be metric spaces. A mapping $f:A\to B$
is called a {\it coarse embedding} if there exist non-decreasing
functions $\rho_1,\rho_2:[0,\infty)\to[0,\infty)$ such that {\bf
(1)} $\forall x,y\in A~ \rho_1(d_A(x,y))\le
d_B(f(x),f(y))\le\rho_2(d_A(x,y))$; {\bf (2)}
$\lim_{r\to\infty}\rho_1(r)=\infty$. \medskip

We are interested in conditions under which a locally finite
metric space $M$ embeds coarsely into a Hilbert space.  See
\cite{Gro93}, \cite{Roe03}, and \cite{Yu06} for motivation and
background for this problem. Since, as it is well-known (see
e.\,g. \cite[Section 4]{Ost09}), coarse embeddability into a
Hilbert space is equivalent to coarse embeddability into $L_1$, we
consider coarse embeddability into $L_1$.
\medskip

Locally finite metric space which are not coarsely embeddable into
$L_1$ were characterized in \cite{Ost09} and \cite{Tes09}. We
reproduce the characterization as it is stated in \cite{Ost09}.

\begin{theorem}[{\cite[Theorem 2.4]{Ost09}}]\label{T:mu} Let $(M,d_M)$ be a locally
finite metric space which is not coarsely embeddable into $L_1$.
Then there exists a constant $D$, depending on $M$ only, such that
for each $n\in\mathbb{N}$ there exists a finite set $M_n\subset M$
and a probability measure $\mu_n$ on $M_n\times M_n$ such that
\begin{itemize}
\item $d_M(u,v)\ge n$ for each $(u,v)\in\supp\mu_n$.

\item For each Lipschitz function $f:M\to L_1$ we have
\begin{equation}\label{E:OTE}\int_{M_n\times M_n}||f(u)-f(v)||_{L_1}d\mu_n(u,v)\le D\lip(f).\end{equation}
\end{itemize}
\end{theorem}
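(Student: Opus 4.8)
The plan is to recast the statement as a convex–duality (minimax) computation at each scale $n$, and then to identify the resulting value with a single ``gap at scale $n$'' quantity whose boundedness is equivalent to coarse non\nobreakdash-embeddability. Two preliminary reductions streamline this. Since a bounded locally finite space is finite and embeds into $\mathbb{R}\subset L_1$, the hypothesis forces $M$ to be unbounded, so pairs at distance $\ge n$ exist for every $n$. Moreover, because $\mu_n$ in \eqref{E:OTE} is supported on a finite set $M_n\times M_n$, only the restriction $f|_{M_n}$ enters, and $\lip(f|_{M_n})\le\lip(f)$; hence, having fixed a finite $K=M_n$, it suffices to bound $\int\|f(u)-f(v)\|_{L_1}\,d\mu_n\le D$ over all $1$\nobreakdash-Lipschitz $f\colon K\to L_1$. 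No global function and no Lipschitz extension are needed for this direction.

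Fix $n$ and a finite $K\subseteq M$ carrying a pair at distance $\ge n$, put $A=\{(u,v)\in K\times K:\ d_M(u,v)\ge n\}$, and consider the two\nobreakdash-person game with a probability measure $\mu$ on the finite set $A$ against a $1$\nobreakdash-Lipschitz $f\colon K\to L_1$, with payoff $\int_A\|f(u)-f(v)\|_{L_1}\,d\mu$. I would apply Sion's minimax theorem: the measures form a compact convex simplex; a finite $L_1$\nobreakdash-semimetric on $K$ embeds into some $\ell_1^N$, so, modulo translation and after the bound $\|f(u)-f(v)\|_{L_1}\le\diam K$, the admissible $f$ range over a compact convex set, while the payoff is linear in $\mu$ and convex in $f$. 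Since the infimum of a linear functional over the simplex is attained at a vertex (a point mass),
\[
\inf_{\mu}\ \sup_{\lip(f)\le1}\ \int_A\|f(u)-f(v)\|_{L_1}\,d\mu
=\sup_{\lip(f)\le1}\ \min_{(u,v)\in A}\|f(u)-f(v)\|_{L_1}=:V_n(K),
\]
and the optimal $\mu$ realizes the value. Thus for any finite $K$ there is a probability measure on $A$ with $\int\|f(u)-f(v)\|_{L_1}\,d\mu\le V_n(K)\,\lip(f)$ for all $f$; producing the required $\mu_n$ amounts to choosing $K=M_n$ with $V_n(M_n)$ bounded by a constant independent of $n$.

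The content therefore reduces to two lemmas. Writing $\gamma_n=\sup\{\,\inf_{d_M(u,v)\ge n}\|f(u)-f(v)\|_{L_1}:\ f\colon M\to L_1,\ \lip(f)\le1\,\}$ for the best uniform gap at scale $n$ attainable on all of $M$, and noting that $V_n$ is nonincreasing under enlarging $K$ (more far pairs are harder to separate), I set $\lambda_n=\inf_K V_n(K)=\lim_{K\uparrow M}V_n(K)$ along an exhaustion $K_1\subseteq K_2\subseteq\cdots$ of $M$. The first lemma is the localization identity $\lambda_n=\gamma_n$: the inequality $\gamma_n\le\lambda_n$ is immediate by restricting a global map, while $\lambda_n\le\gamma_n$ requires manufacturing a single global $1$\nobreakdash-Lipschitz map with gap $\gamma_n$ at scale $n$ out of the finite\nobreakdash-domain near\nobreakdash-optimizers on the $K_j$. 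The second lemma is that $\gamma_n\to\infty$ implies $M$ embeds coarsely into $L_1$: given $1$\nobreakdash-Lipschitz $f_n\colon M\to L_1$ with gap $\to\infty$ at increasing scales, the weighted $\ell_1$\nobreakdash-direct sum $F(x)=(c_nf_n(x))_n$ (an $L_1$\nobreakdash-valued map) has upper bound $(\sum_n c_n)\,d_M$ when $\sum_n c_n<\infty$ and lower bound diverging with $d_M$ when $c_n\gamma_n\to\infty$, i.e.\ a coarse embedding, contradicting the hypothesis through the stated equivalence of $L_1$\nobreakdash- and Hilbert\nobreakdash-embeddability. Granting both lemmas, coarse non\nobreakdash-embeddability yields $\sup_n\gamma_n<\infty$ (else the second lemma embeds $M$), hence $\lambda_n=\gamma_n\le D$ for a fixed $D$ and all $n$; choosing $M_n$ with $V_n(M_n)\le D+1$ together with its optimal $\mu_n$ then gives \eqref{E:OTE}.

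The main obstacle is the localization inequality $\lambda_n\le\gamma_n$, i.e.\ passing from finite\nobreakdash-domain maps with a common gap to one global map into $L_1$. Restriction yields maps on each $K_j$ but not a map on all of $M$, and $L_1$ has no isometric Lipschitz\nobreakdash-extension property, so a naive limit need not land back in $L_1$. I would handle this by a compactness/ultraproduct argument leaning on local finiteness: center the near\nobreakdash-optimizers at a basepoint, take an ultralimit of their (uniformly bounded) restrictions to each finite set inside an ultrapower of $L_1$, which is again an $L_1(\nu)$\nobreakdash-space, and note that the separable range of the resulting global map embeds isometrically back into $L_1$; local finiteness guarantees the limit is defined at every point and inherits both the $1$\nobreakdash-Lipschitz bound and the gap $\gamma_n-\varepsilon$ at scale $n$. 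This finite\nobreakdash-determinacy step is the technical heart; by contrast the minimax computation and the $\ell_1$\nobreakdash-sum assembly are essentially formal once the compactness is in place.
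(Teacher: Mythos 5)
This paper does not actually prove Theorem~\ref{T:mu}: it is quoted from \cite{Ost09}, so your proposal must be measured against the proof there, whose outline (a duality step on finite subsets, a localization/finite-determination step, and a weighted direct-sum assembly of maps into a coarse embedding) your plan does match. The genuine gap is in your pivotal minimax step. Sion's theorem requires the payoff to be quasi-\emph{concave} in the variable being maximized, but $f\mapsto\int_A\|f(u)-f(v)\|_{L_1}\,d\mu$ is \emph{convex} in $f$, and convex--convex games have duality gaps in general: for the payoff $|x-y|$ on $[0,1]\times[0,1]$ one has $\inf_y\sup_x|x-y|=\tfrac12$ while $\sup_x\inf_y|x-y|=0$. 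So the identity $\inf_\mu\sup_f=\sup_f\min_A$ cannot be obtained from the hypotheses you state (compact convex strategy sets, linearity in $\mu$, convexity in $f$). The identity is nevertheless true, but for a reason your write-up never invokes: the payoff depends on $f$ only through the induced semimetric $\rho_f(u,v)=\|f(u)-f(v)\|_{L_1}$, and the set $C=\{\rho:\ \rho\ \hbox{an } L_1\hbox{-semimetric on }K,\ \rho\le d_M\}$ is a \emph{compact convex} subset of $\mathbb{R}^{K\times K}$, because $L_1$-semimetrics form a closed convex cone (the cut cone: an $\ell_1$-direct sum realizes the sum of two $L_1$-semimetrics). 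Rewriting the game in the variable $\rho\in C$ makes the payoff bilinear, and only then does the minimax theorem apply. This convexification is exactly where the geometry of $L_1$ enters --- the analogous set for a Hilbert target is \emph{not} convex (the sum of two Euclidean semimetrics need not be Euclidean: the path metric of a $4$-cycle is such a sum but does not embed in $\ell_2$) --- so an argument like yours, which uses nothing about the target beyond compactness, cannot be complete. The gap is repairable without changing your global plan, but the missing idea is the heart of the duality step.

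A second, smaller defect: in your assembly lemma you ask for weights with $\sum_n c_n<\infty$ and $c_n\gamma_n\to\infty$ simultaneously, which can be impossible when $\gamma_n$ grows slowly (say $\gamma_n\asymp\log\log n$): the second condition forces $c_n\ge a_n/\gamma_n$ with $a_n\to\infty$, and then $\sum_n c_n=\infty$. The standard fix uses that $\gamma_n$ is non-decreasing, so $\sup_n\gamma_n=\infty$ lets you choose a sparse subsequence of scales $n_k$ with $\gamma_{n_k}\ge 4^k$ and weights $c_k=2^{-k}$; then $\|F(u)-F(v)\|\ge 2^k$ whenever $d_M(u,v)\ge n_k$, which suffices for a coarse embedding. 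The remaining ingredients of your proposal are sound: the reduction of \eqref{E:OTE} to maps defined only on $M_n$, and the localization $\lambda_n=\gamma_n$ via a basepoint-centered ultralimit (ultrapowers of $L_1$ are $L_1(\nu)$-spaces, and the separable range of the limit map embeds isometrically into $L_1$); the latter is a legitimate alternative to the constructive gluing used in \cite{Ost09}.
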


Our first purpose is to find some expansion properties of sets
$M_n$.\medskip

Let $s$ be a positive integer. We consider graphs $G(n,s)=(M_n,
E(M_n,s))$, where the edge set $E(M_n,s)$ is obtained by joining
those pairs of vertices of $M_n$ which are at distance $\le s$.
The graphs $\{G(n,s)\}_{n=1}^\infty$ have uniformly bounded
degrees if the metric space $M$ has bounded geometry.
\medskip

\noindent{\bf Observation:} Each vertex cut of $G(n,s)$ separates
it into pieces with $d_M$-distance between then at least $s$.
\medskip

If we would prove in the bounded geometry case that the condition
\begin{itemize}
\item[(*)] For some $s\in\mathbb{N}$ there is a number $h_s>0$ and
subgraphs $H_n$ of $G(n,s)$ of indefinitely growing sizes (as
$n\to\infty$) such that the expansion constants of $\{H_n\}$ are
uniformly bounded from below by $h_s$
\end{itemize}
is satisfied, it would solve the well-known problem (see
\cite{GK04}, \cite{Ost09}, \cite{Tes09}): whether each metric
space with bounded geometry which does not embed coarsely into a
Hilbert space contains weak expanders? For spaces with bounded
geometry weak expanders are defined as Lipschitz images $f_m(X_m)$
of (vertex sets) of a family of expanders with uniformly bounded
Lipschitz constants of $\{f_m\}_{m=1}^\infty$ and without
dominating pre-images in the sense that
$\displaystyle{\lim_{m\to\infty}\max_{z\in
f_m(X_m)}}(|f_m^{-1}(z)|/|X_m|)=0$.

\begin{remark}{Remark} When we consider a connected graph
as a metric space, we identify the graph with its vertex set
endowed with the standard graph distance.
\end{remark}

The well-known proof of non-embeddability of expanders (see
\cite{Gro00}, \cite{Mat97}, \cite[Section 11.3]{Roe03}) shows that
a metric space with bounded geometry containing weak expanders
does not embed coarsely into a Hilbert space.)
\medskip

In this paper we prove only the following weaker expansion
property of the graphs $G(n,s)$. We introduce the measure $\nu_n$
on $M_n$ by $\nu_n(A)=\mu_n(A\times M_n)$. Let $F$ be an induced
subgraph of $G(n,s)$. We denote the vertex boundary of a set $A$
of vertices in $F$ by $\delta_F A$.

\begin{theorem}\label{T:mu-exp} Let $s$ and $n$ be such that
$2n>s>8D$. Let $\varphi(D,s)=\frac{s}{4D}-2$. Then $G(n,s)$
contains an induced subgraph $F$ with $d_M$-diameter $\ge n-\frac
s2$, such that each subset $A\subset F$ of $d_M$-diameter
$<n-\frac s2$ satisfies the condition: $\nu_n(\delta_F A)>
\varphi(D,s)\nu_n(A)$.
\end{theorem}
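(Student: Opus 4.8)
The plan is to distill from inequality~(\ref{E:OTE}) a single scalar estimate — that any set of small $d_M$-diameter carries little $\nu_n$-mass — and then to marry it to the metric geometry of a large-diameter component of $G(n,s)$. First I would symmetrize: replacing $\mu_n$ by $\tfrac12(\mu_n+\mu_n^{\mathrm T})$ preserves both the support condition and~(\ref{E:OTE}) and makes both marginals equal to $\nu_n$, so I may pass freely between $\mu_n(\,\cdot\times M_n)$ and $\mu_n(M_n\times\,\cdot)$. Two elementary facts will be used throughout: if $\diam S<n$ then $\mu_n(S\times S)=0$ (no support pair fits inside $S$), and if $\diam S<n-\tfrac s2$ then every support partner $v$ of a point $u\in S$ has $d_M(v,S)\ge n-\diam S>\tfrac s2$. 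Feeding the $1$-Lipschitz function $x\mapsto\min\{d_M(x,S),\tfrac s2\}$ into~(\ref{E:OTE}) and bounding the left-hand side below by the mass issuing from $S$ (each such pair contributing exactly $\tfrac s2$, by the second fact) gives $\tfrac s2\,\nu_n(S)\le D$, i.e.
\[
\diam S<n-\tfrac s2\ \Longrightarrow\ \nu_n(S)\le \tfrac{2D}{s}.
\]
Here $s>8D$ makes this bound nontrivial ($<\tfrac14$) and $2n>s$ keeps the truncation level below the pair distances.

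Next I would take $F$ to be a connected component of $G(n,s)$ of maximal $d_M$-diameter and verify $\diam F\ge n-\tfrac s2$; establishing that \emph{some} component is this large — rather than a far-apart support pair $(p,q)$ being split among thin components — is itself a point to check, and is where maximality and the mass bound must interact. The payoff of choosing a full component is that every vertex of $M_n$ lying within $d_M$-distance $s$ of $A\subset F$ already belongs to $F$; thus $N_s(A):=\{x:d_M(x,A)\le s\}$ satisfies $N_s(A)=A\sqcup\delta_F A$, so $\nu_n(\delta_F A)=\nu_n(N_s(A))-\nu_n(A)$. I would then recast~(\ref{E:OTE}), applied to $x\mapsto\min\{d_M(x,A),T\}$, in cut form via the layer-cake formula: $\int_0^T C(r)\,dr\le \tfrac D2$, where $C(r)=\mu_n\bigl(N_r(A)\times N_r(A)^{\mathrm c}\bigr)$. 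The second fact supplies a matching lower bound $C(r)\ge\nu_n(A)$ for all $r<n-\diam A$, since all mass emanating from $A$ crosses every such radius; and the two-sided term $\mu_n(N_r\times N_r)$, which would otherwise spoil this, vanishes for $r\le\tfrac s4$.

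The main obstacle is the \emph{direction} of the conclusion. Inequality~(\ref{E:OTE}) is an upper bound on a spreading functional, so every estimate derived above it controls the $\nu_n$-measure of neighborhoods of $A$ only from above; by itself it yields at most the additive bound $\nu_n(\delta_F A)\ge 2\,\nu_n(A)-\tfrac{D}{s}$ (obtained by integrating $C(r)\ge\nu_n(A)$ on $[0,\tfrac s2)$ and $C(r)\ge\nu_n(A)-\nu_n(\delta_F A)$ on $(\tfrac s2,s)$). This has the right ``$-2$'' flavour but a bounded coefficient, falling far short of the multiplicative factor $\tfrac{s}{4D}$. That factor therefore cannot come from~(\ref{E:OTE}) alone; it must exploit the metric–combinatorial structure of $F$.

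The route I would try for the genuine content is a separation/peeling argument resting on the Observation. Assume some $A$ with $\diam A<n-\tfrac s2$ violates $\nu_n(\delta_F A)>\varphi(D,s)\,\nu_n(A)$; then $\delta_F A$ is a $\nu_n$-light vertex cut separating $A$ from the remainder $F\setminus N_s(A)$, and by the Observation the two sides are $d_M$-separated by at least $s$. I would iterate this across the concentric shells $N_{(j+1)s}(A)\setminus N_{js}(A)$, using the mass bound $\nu_n(S)\le\tfrac{2D}{s}$ to cap how much mass each $d_M$-thin stage can hold while $C(r)\ge\nu_n(A)$ forces mass to accumulate, aiming to contradict either the maximality of $\diam F$ or the total mass $1$; tracking the $\sim s/(4D)$ shells of width $\sim 4D$ is what I expect to produce the factor $\tfrac{s}{4D}$, with the two extreme shells accounting for the $-2$. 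Converting this scheme into the clean constant, and in particular controlling the two-sided terms $\mu_n(N_r\times N_r)$ on the range $\tfrac s4<r<s$ where they are not a priori negligible, is the delicate heart of the argument and the step I expect to be hardest.
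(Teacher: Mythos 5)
Your preliminary estimates are sound: the symmetrization is harmless, and feeding $x\mapsto\min\{\dist(x,S),\frac s2\}$ into \eqref{E:OTE} does give $\nu_n(S)\le\frac{2D}{s}$ for every $S$ of $d_M$-diameter $<n-\frac s2$; you also correctly diagnose that such ``one set at a time'' consequences of \eqref{E:OTE} cannot by themselves produce the multiplicative factor $\frac{s}{4D}$. But the argument you propose to supply that factor has a genuine gap, and it is not just the shell-bookkeeping you flag at the end. Fixing $F$ to be a connected component of maximal $d_M$-diameter and then assuming a single bad set $A\subset F$ leads nowhere: every estimate you can extract from \eqref{E:OTE} scales with $\nu_n(A)$, and $\nu_n(A)$ may be arbitrarily small (even $0$) while essentially all of the mass of $\nu_n$ sits in a part of $F$ far from $A$. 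Then your crossing numbers $C(r)\ge\nu_n(A)$ are negligible, the concentric shells $N_{(j+1)s}(A)\setminus N_{js}(A)$ are almost $\nu_n$-null, and neither the maximality of $\diam F$ nor the total mass $1$ is threatened --- there is no contradiction to be extracted from one bad set. This also shows that the existential quantifier over $F$ in the statement is essential: the maximal-diameter component may genuinely contain a bad set of small mass, in which case the subgraph $F$ whose existence the theorem asserts must be found elsewhere; no canonical choice of $F$ fixed in advance can be expected to work.

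What the paper does instead is negate the full statement --- \emph{every} induced subgraph of $d_M$-diameter $\ge n-\frac s2$ (not only components) contains a bad set --- and exploit this hypothesis repeatedly through an exhaustion: find a bad $A_1$ in $F_1=G(n,s)$, delete $A_1\cup\delta_{F_1}A_1$, find a bad $A_2$ in the remainder $F_2$, and so on, until the remainder $A_p$ itself has diameter $<n-\frac s2$. Deleting the vertex boundaries, together with the Observation you quote, makes the sets $A_1,\dots,A_p$ pairwise $s$-separated, and the sets $A_i\cup\delta_{F_i}A_i$ partition $M_n$. The second missing idea is simultaneous random signing: for $\theta\in\{-1,1\}^p$ put $f_\theta(x)=\theta_j\left(\frac s2-\dist(x,A_j)\right)$ when $\dist(x,A_j)<\frac s2$ and $f_\theta(x)=0$ otherwise; then $x\mapsto F_x$, $F_x(\theta)=f_\theta(x)$, is a single $1$-Lipschitz map into $L_1(\{-1,1\}^p)$, and for $(x,y)\in\supp\mu_n$ with $x\in A_j$ the signs of $f_\theta(x)$ and $f_\theta(y)$ disagree with probability $\ge\frac12$, since $d_M(y,A_j)\ge\frac s2$ makes the sign of $f_\theta(y)$ independent of $\theta_j$. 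One application of \eqref{E:OTE} therefore yields $D\ge\frac s4\,\nu_n\left(\cup_iA_i\right)$, i.e.\ a lower bound on the mass of \emph{all} the bad pieces at once --- precisely the reversal of direction that you observed to be impossible for a single set. Since each $\nu_n(\delta_{F_i}A_i)\le\varphi(D,s)\nu_n(A_i)$ and the pieces partition $M_n$, one gets $(1+\varphi(D,s))\nu_n(\cup_iA_i)\ge1$, hence $D\ge\frac{s}{4(1+\varphi(D,s))}$, contradicting $\varphi(D,s)=\frac s{4D}-2$. (Incidentally, the same signing argument applied to the components of $G(n,s)$ would prove the fact you left unverified, that some component has $d_M$-diameter $\ge n-\frac s2$; but without the exhaustion and the simultaneous signing, your single-set bound cannot be leveraged into the expansion statement.)
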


\begin{proof}{Proof of Theorem \ref{T:mu-exp}} Suppose that for some $n,s\in\mathbb{N}$
satisfying $2n>s>8D$ there is no such subgraph in $G(n,s)$. Then
for each induced subgraph $F$ in $G(n,s)$ of $d_M$-diameter $\ge
n-\frac s2$ we can find a subset $A\subset F$ of $d_M$-diameter
$<n-\frac s2$ such that $\nu_n(\delta_F
A)\le\varphi(D,s)\nu_n(A)$. We start with $F_1=G(n,s)$ (the
definitions of $M_n$ and $\mu_n$ imply that the $d_M$-diameter of
$M_n$ is $\ge n$), find a subset $A_1\subset F_1$ of
$d_M$-diameter $<n-\frac{s}2$ such that $\nu_n(\delta_{F_1}
A_1)\le\varphi(D,s)\nu_n(A_1)$, and remove $A_1\cup\delta_{F_1}
A_1$ from $G(n,s)$. If the obtained graph $F_2$ still has
$d_M$-diameter $\ge n-\frac s2$, we find a subset $A_2$ in it such
that $\nu_n(\delta_{F_2} A_2)\le \varphi(D,s)\nu_n(A_2)$. We
remove the subset $A_2\cup\delta_{F_2} A_2$ from $F_2$. We
continue in an obvious way till we get a set of $d_M$-diameter
$<n-\frac s2$ (this should eventually happen since $M_n$ is
finite). We denote this set $A_p$, where $p$ is the number of
steps in the process.\medskip

\begin{remark}{Remark} This exhaustion process is similar to the
one used in \cite{LS93}.
\end{remark}

Observe that each of the sets $A_i$ has diameter $<n-\frac s2$,
and that the $d_M$-distance between any $A_i$ and $A_j$ $(i\ne j)$
is at least $s$ (see the observation above).
\medskip

We introduce a family of  $1$-Lipschitz functions $f_\theta$ on
$M$, where $\theta=\{\theta_i\}_{i=1}^p\in\Theta=\{-1,1\}^p$ by
the formula:
$$f_\theta(x)=\begin{cases}\theta_j\displaystyle{\left(\frac {s}2-\dist(x,A_j)\right)} &\hbox{ if }\dist(x,A_j)<\frac s2\\
0 & \hbox{ if }\hbox{dist}(x,\cup_{i=1}^p A_i)\ge \frac
s2.\end{cases}$$ The function is well-defined since the inequality
$\dist(x,A_j)<\frac s2$ cannot be satisfied for more than one
value of $j$. Straightforward verification shows that this
function is $1$-Lipschitz.\medskip

We endow $\Theta=\{-1,1\}^p$ with the natural probability measure
$\mathcal{P}$ and introduce for each $x\in M$ a function $F_x\in
L_1(\Theta,\mathcal{P})$ given by $F_x(\theta)=f_\theta(x)$. It is
clear that the mapping  $x\mapsto F_x$ is $1$-Lipschitz.
\medskip

Applying inequality \eqref{E:OTE} to this mapping we get
\[\begin{split}D&\ge\int_{M_n\times M_n}||F_x(\theta)-F_y(\theta)||_{L_1(\Theta,\mathcal{P})}d\mu_n(x,y)\ge
\int_{M_n\times M_n}\int_\Theta
|f_\theta(x)-f_\theta(y)|d\mathcal{P}(\theta)d\mu_n(x,y)\\&\ge
\int_{M_n\times
M_n}\int_{\Psi(x,y)}|f_\theta(x)|d\mathcal{P}(\theta)d\mu_n(x,y),\end{split}\]
where $\Psi(x,y)$ is the subset of $\Theta$ for which
$f_\theta(x)$ and $f_\theta(y)$ have different signs (we mean that
signs have values in $\{-1,0,1\}$). Observe that the value of
$|f_\theta(x)|$ does not depend on $\theta$. We get

\[\begin{split}
\int_{M_n\times
M_n}\int_{\Psi(x,y)}|f_\theta(x)|d\mathcal{P}(\theta)d\mu_n(x,y)&\ge
\int_{\left(\cup_{i=1}^pA_i\right)\times
M_n}|f_\theta(x)|\int_{\Psi(x,y)}d\mathcal{P}(\theta)d\mu_n(x,y).
\end{split}\]

Now we observe that for $x\in A_j$ and $y$ satisfying
$(x,y)\in\supp\mu_n$ we have $d_M(x,y)\ge n$ and therefore
$d_M(y,A_j)\ge\frac s2$ (recall that the diameter of $A_j$ is $<
n-\frac s2$). Hence $\mathcal{P}(\Psi(x,y))\ge\frac12$ for each
pair $(x,y)$ from $\supp\mu_n$. We get

\[\begin{split}
\int_{\left(\cup_{i=1}^pA_i\right)\times
M_n}|f_\theta(x)|\int_{\Psi(x,y)}d\mathcal{P}(\theta)d\mu_n(x,y)
&\ge \int_{\left(\cup_{i=1}^pA_i\right)\times M_n}\frac
s2\cdot\frac12d\mu_n(x,y)
\\
&= \frac s4\, \nu_n\left(\cup_i A_i\right). \end{split}\]

\begin{remark}{Remark} The idea of ``random'' signing of functions
in a similar situation was used in \cite{Rao99}.
\end{remark}

Recalling the beginning of this chain of inequalities, we get
\begin{equation}\label{E:DvsNu}D\ge\frac s4\, \nu_n\left(\cup_i A_i\right).
\end{equation}

Observe that $\nu_n(\cup_i A_i)+\nu_n(\cup_i \delta_{F_i}A_i)=1$
and $\nu_n(\cup_i \delta_{F_i} A_i)\le\varphi(D,s)\nu_n(\cup_i
A_i)$. Therefore
\begin{equation}\label{E:DvsPhi} (1+\varphi(D,s))\nu(\cup_i A_i)\ge 1\end{equation}
Combining \eqref{E:DvsNu} and \eqref{E:DvsPhi} we get
$$D\ge\frac s{4(1+\varphi(D,s))},$$
or $\varphi(D,s)\ge \frac s{4D}-1$, a contradiction.
\end{proof}

Now we combine Theorem \ref{T:mu-exp} with some results and
technique from \cite{KPR93} (some of the estimates from
\cite{KPR93} were improved in \cite{FT03} but we do not use this
improvement).

\begin{theorem}\label{T:MinorExcl} Let $r\in\mathbb{N}$ and $G$ be a locally finite connected graph which does not have $K_r$-minors, let $d_G$ be the graph distance on
$G$. Then $(G,d_G)$ embeds coarsely into $L_1$.
\end{theorem}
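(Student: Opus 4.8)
The plan is to argue by contradiction: assume that $(G,d_G)$ does not embed coarsely into $L_1$ and then play the inequality \eqref{E:OTE} against a padded random partition of $G$ supplied by \cite{KPR93}. First I would invoke Theorem \ref{T:mu} to obtain the constant $D$ and, for every $n$, the finite set $M_n$ together with the probability measure $\mu_n$ concentrated on pairs at $d_M$-distance $\ge n$ and satisfying \eqref{E:OTE}. The goal is then to build, for a suitable pair $n,s$, a Lipschitz map into $L_1$ violating \eqref{E:OTE}, following the random-signing scheme in the proof of Theorem \ref{T:mu-exp} but with the exhaustion sets $A_i$ replaced by the clusters of a minor-free low-diameter decomposition.

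The essential external input is the Klein--Plotkin--Rao decomposition: since $G$ has no $K_r$-minor, for every scale $\Delta$ there is a random partition $P$ of $(V(G),d_G)$ (applied to a finite ball large enough to contain $M_n$ and its $s$-neighbourhood) into clusters of $d_G$-diameter $\le\Delta$ for which $\mathcal{P}[\,B(x,\rho)\not\subseteq P(x)\,]\le\beta(r)\rho/\Delta$, with $\beta(r)$ depending only on $r$. I would apply this at a scale $\Delta<n$, so that every pair in $\supp\mu_n$, being at distance $\ge n$, is automatically split between two distinct clusters, while simultaneously taking $\Delta$ large compared with $\beta(r)s$, so that the padding-failure probability at radius $\frac s2$ is small.

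Given a realization of $P$, I would attach i.i.d.\ signs $\theta=\{\theta_i\}\in\{-1,1\}$ to the clusters and set $f_\theta(x)=\theta_{i(x)}\min\{\tfrac s2,\dist(x,V\setminus P(x))\}$, where $i(x)$ indexes the cluster of $x$; a routine check gives $\lip(f_\theta)\le 2$. Feeding the induced map $x\mapsto(f_\theta(x))_\theta\in L_1(\Theta,\mathcal{P})$ into \eqref{E:OTE} and repeating the chain of inequalities from Theorem \ref{T:mu-exp} (restricting to the event where $f_\theta(x)$ and $f_\theta(y)$ have opposite signs), I use that for $(x,y)\in\supp\mu_n$ the endpoints lie in different clusters, so their signs disagree with probability $\ge\frac12$, while on the event that both $x$ and $y$ are $\frac s2$-padded one has $|f_\theta(x)|=\frac s2$. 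Taking expectation over the random partition and applying the padding bound then yields $2D\ge\frac s4\bigl(1-\beta(r)s/\Delta\bigr)$.

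To finish I would first fix $s>16D$ and then pick $n>2\beta(r)s$, which is legitimate since \eqref{E:OTE} is available for every $n$; with $\Delta\in[2\beta(r)s,\,n)$ the last inequality forces $2D\ge s/8>2D$, a contradiction, so no such $\mu_n$ can exist and $(G,d_G)$ embeds coarsely into $L_1$. The main obstacle I anticipate is entirely geometric: securing the padded decomposition with the correct dependence $\beta(r)$ on the excluded minor and transferring it from the finite graphs of \cite{KPR93} to the possibly infinite metric $d_G$ at the single relevant scale, in particular reconciling strong- versus weak-diameter clusters and matching the padding radius $\frac s2$ against $\Delta$. Once that decomposition is in hand, the signing argument and the calibration of $s$, $n$ and $\Delta$ are routine, the only real bookkeeping being the constant in $\lip(f_\theta)$.
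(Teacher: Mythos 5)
Your proposal is correct, but it takes a genuinely different route from both proofs in the paper. The paper's main proof also argues by contradiction from Theorem \ref{T:mu}, but it factors the argument through Theorem \ref{T:mu-exp}: it first extracts an induced subgraph $F\subset G(n,s)$ with the expansion property $\nu_n(\delta_F A)>\varphi(D,s)\nu_n(A)$ for all sets $A$ of $d_M$-diameter $<n-\frac s2$ (the random signing is performed there, on exhaustion sets $A_i$ that are pairwise $s$-separated), and then it runs the KPR deletion scheme \emph{deterministically}, choosing each cut level $\alpha$ by an averaging argument so that a $\left(\frac t{2s+t}\right)^r$ fraction of $\nu_n(F)$ survives inside pieces of diameter $<n-\frac s2$; the expansion property of $F$ applied to those pieces gives the contradiction. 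You skip Theorem \ref{T:mu-exp} entirely and play \eqref{E:OTE} directly against a \emph{randomized padded} KPR decomposition at a single scale, with i.i.d.\ signs on clusters. This is shorter, but it needs a strictly stronger geometric input: the padding estimate $\mathcal{P}[B(x,\rho)\not\subseteq P(x)]\le\beta(r)\rho/\Delta$ (equivalently the expected distance-to-cut bound of \cite[Lemma 3]{Rao99}, or the form in \cite{FT03}), whereas the paper's main route uses only the weak-diameter bound of \cite[Theorem 4.2]{KPR93} plus elementary averaging; in exchange, the paper's detour produces Theorem \ref{T:mu-exp}, a statement of independent interest (indeed the paper's declared main purpose), which your argument does not recover. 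Your proof is in effect a single-scale, contradiction-based version of the paper's \emph{appendix} proof, which uses exactly your ingredients (random KPR decomposition, random signs, Rao's padding lemma) but assembles them into an explicit multi-scale coarse embedding, avoiding Theorem \ref{T:mu} altogether. Your quantitative endgame checks out ($\lip\le 2$, sign-disagreement probability $\frac12$ for split pairs, $2D\ge\frac s4\bigl(1-\beta(r)s/\Delta\bigr)$, then $s>16D$, $\Delta\ge 2\beta(r)s$, $n>\Delta$), and the technicalities you flag are real but routine: restrict the decomposition to the $d_G$-neighbourhood of $M_n$ of radius comparable to $s$, define $f_\theta$ via $d_G$-distance to the cluster complement and extend by $0$ outside (this keeps $\lip\le2$ and makes \eqref{E:OTE} applicable), and note that a buffer of width $s$ makes subgraph distances agree with $d_G$ at the padding scale $\frac s2$, while weak diameter $<n$ already suffices to split every pair in $\supp\mu_n$.
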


\begin{proof}{Proof} Assume the contrary. We apply Theorem
\ref{T:mu} to $G$ and denote by $D$, $M_n$, and $\mu_n$ the
corresponding constant (depending only on $G$), finite sets, and
probability measures. Let $\nu_n$ be measures introduced in
Theorem \ref{T:mu-exp}. According to Theorem \ref{T:mu-exp} for
each $2n>s>8D$ there is an induced subgraph $F=F(n,s)$ in $G(n,s)$
such that the condition of Theorem \ref{T:mu-exp} is satisfied.
The condition $\nu_n(\delta_F A)>\varphi(D,s)\nu_n(A)$ implies
that $\nu_n(F)>0$.
\medskip

Now we use a modified construction from \cite[Section 4]{KPR93}.
Let $t,s\in\mathbb{N}$ (we shall specify our choice of these
numbers later). Let $\Delta=t+2s$. We pick a vertex $x_1\in G$,
$\alpha\in\{0,1,2,\dots,\Delta-1\}$, and let
$$D_1= \{v\in G:~
{(d_G(v,x_1)-\alpha)\pmod{\Delta}}\in\{1,2,\dots,2s\}\}$$ (that
is, $D_1$ consists of infinitely many `annuluses' of width $2s$
each, with distances $t$ between them). We choose $\alpha$ in such
a way that $\nu_n(D_1\cap F)$ is the minimal possible. Using
averaging argument we get that $\alpha$ can be chosen in such a
way that $\nu_n(D_1\cap F)\le\left(\frac
{2s}{2s+t}\right)\nu_n(F)$.
\medskip

We delete $D_1$ from $G$. The second round of deletions is: we
repeat the same procedure for each of the components of the
obtained graph endowed with its own graph distance. Each time we
choose the corresponding $\alpha$ (the level of cut) in such a way
$\nu_n(D\cap F)\le\left(\frac {2s}{2s+t}\right)\nu_n(F\cap X)$,
where $X$ is the component under consideration and $D$ is the set
of vertices deleted this time.
\medskip

We do $r$ rounds of deletions. Let $\{G_i\}$ be the components of
the remaining graph. The argument of \cite[Theorem 4.2]{KPR93}
shows that the $d_G$-diameter of each of $G_i$ does not exceed
$(r-1)(4(r+1)t+1)$ (where $r$ is from the statement of the
theorem). It is also easy to see that
\begin{equation}
\nu_n\left(F\cap \left(\cup_iG_i\right)\right)\ge\left(\frac
t{2s+t}\right)^r\nu_n(F).
\end{equation}

Now we impose additional conditions on $s$, $t$, and $n$ (the
condition $2n>s>8D$ was imposed in Theorem \ref{T:mu-exp}) The
conditions are
\begin{equation}\label{E:vv}
(\varphi(D,s)+1)\left(\frac t{2s+t}\right)^r>1
\end{equation}
\begin{equation}\label{E:vvv}
(r-1)(4(r+1)t+1)< n-\frac s2.
\end{equation}

These conditions can be satisfied. In fact, we choose $s>8D$
first. Then we choose $t$ such that \eqref{E:vv} is satisfied, and
then $n$ such that \eqref{E:vvv} is satisfied.\medskip

Let $R_i=F\cap G_i$. Our choice of parameters implies that the
$d_G$-diameter of $R_i$ is $<n-\frac s2$. Therefore
$\nu_n(\delta_FR_i)>\varphi(D,s)\nu_n(R_i)$. Since $\{\delta_F
R_i\}$ are disjoint (this was the reason why we deleted
`annuluses' of width $2s$), we get \[\begin{split}\nu_n(F)&\ge
\nu_n(\cup_i\delta_F R_i)+\nu_n(\cup_i R_i)>
(\varphi(D,s)+1)\nu_n(\cup_iR_i)\\
&\ge (\varphi(D,s)+1)\left(\frac
t{2s+t}\right)^r\nu_n(F).\end{split}\] We get a contradiction with
\eqref{E:vv}.
\end{proof}

\noindent{\bf Appendix: Coarse embeddability of graphs with
excluded minors. Second proof}
\medskip

The purpose of this appendix is to show that coarse embeddability
of graphs excluding $K_r$ as a minor can be proved using the
techniques from \cite{KPR93} and \cite{Rao99}  (see also
\cite{FT03}), without using Theorems \ref{T:mu} and
\ref{T:mu-exp}.

\begin{proof}{Second proof of Theorem \ref{T:MinorExcl}} For $\Delta\in\mathbb{N}$ by $[\Delta]$ we denote the
set $\{1,\dots,\Delta\}$. For each $\Delta\in\mathbb{N}$ we
consider the probability space
\begin{equation}\label{E:prod}\Omega_\Delta=\Lambda_{\Delta}\times\Theta,\end{equation} where
$$\Lambda_{\Delta}=[\Delta]^r\hbox{ and }\Theta=\{-1,1\}^{\mathbb{N}}.$$

For each point $\omega\in\Omega_\Delta$ we define a function
$f_{\Delta,\omega}:X\to\mathbb{R}$ in the following way.
\medskip

We assume that elements of $X$ are enumerated, so
$X=\{x_k:~k\in\mathbb{N}\}$. Let $$(\{r_j\}_{j=1}^r,
\{\theta_j\}_{j=1}^\infty)\in\Omega_\Delta$$
\medskip

We denote by $D_1$ the set of all vertices $v$ in $X$ with
$d(v,x_1)=r_1\pmod{\Delta}$.
\medskip

We delete the set $D_1$ from $X$. We label connected components of
the obtained graph by the numbers of the least subscripts of
vertices contained in them. For the component where $x_j$ is the
vertex with the least subscript, we do the same procedure as above
(with the respect to the graph distance defined by the subgraph)
with $d(v,x_j)=r_2\pmod{\Delta}$. So the number $r_2$ is used for
all of the components of this level.
\medskip

We denote the set of all obtained vertices by $D_2$ and delete it
from the graph. We repeat the procedure $r$ times. Let
$\{X_i\}_{i=1}^\infty$ be components of the obtained graph.
\medskip

We define the function $f_{\Delta,\omega}(u)$ corresponding to
$\omega=(\{r_j\}_{j=1}^r, \{\theta_j\}_{j=1}^\infty)$ by
$$f_{\Delta,\omega}(u)=\theta_k\dist\left(u,\cup_{i=1}^n D_i\right),$$
where $k$ is the least subscript of a point $x_k$ belonging to the
same component of $X\backslash(\cup_{i=1}^r D_i)$ as $u$. An
obvious and very important property of $f_{\Delta,\omega}$ is that
it is a real-valued $1$-Lipschitz function.
\medskip

One of the main results of \cite{KPR93} (Theorem 4.2) (see also
\cite{FT03}) implies that the diameters of the components $X_i$
are $<(r-1)(4(r+1)\Delta+1)=:d_{\Delta,r}$.
\medskip

Now, for each vertex $u$ in $X$ we introduce a function
$F_{\Delta,u}(\omega)$ in $L_1(\Omega_\Delta)$ given by
$$F_{\Delta,u}(\omega)=f_{\Delta,\omega}(u)$$
It is easy to see that $|F_{\Delta,u}(\omega)|\le\Delta/2$ for all
$u$ and $\omega$. The function $F_{\Delta, u}(\omega)$ is
measurable because all subsets of $\Omega_\Delta$ are measurable.
(It is worth mentioning that for each $u$ the value of the
function at $\omega$ depends only on finitely many values of
$\theta_i$. In fact, for a fixed $u$ the value of
$f_{\Delta,\omega}(u)$ can depend only on those $\theta_k$ for
which $x_k$ is in the same component $X_i$ as $u$. But for such
$x_k$ we have $d(u,x_k)\le (r-1)(4(r+1)\Delta+1)$. Since $X$ is
locally finite, there are only finitely many $x_k$ satisfying this
condition.)
\medskip

The following inequality is a very important property of the
functions $F_{\Delta,u}$:
\begin{equation}\label{E:bel}\int_{\Omega_\Delta}|F_{\Delta,u}(\omega)|d\omega\ge
\ep_r\Delta,\end{equation} where $\ep_r$ depends on $r$ only (see
\cite[Lemma 3]{Rao99}, the dependence obtained in this way is of
the form $\delta^r$, where $0<\delta<1$). Furthermore, if we write
$\omega=(\lambda,\theta)$ according to \eqref{E:prod}, we have
\begin{equation}\label{E:bel2}\int_{\Omega_\Delta}|F_{\Delta,w}(\lambda,\theta)|d\lambda\ge
\ep_r\Delta~\forall \theta\in\Theta.\end{equation}

If $d(u,v)\ge d_{\Delta,r}$, then $u$ and $v$ are in different
pieces of the decomposition no matter how
$\lambda=\{r_j\}_{j=1}^r$ is chosen. Therefore, with probability
$\frac12$, the signs of $f_{\Delta,\omega}(u)$ and
$f_{\Delta,\omega}(v)$ are different, Let
$\Psi(\lambda)\subset\Theta$ be the subset for which the signs
$f_{\Delta,\lambda,\theta}(u)$ and $f_{\Delta,\lambda,\theta}(v)$
are different. Then
\begin{equation}\label{E:pad}\begin{split}||F_{\Delta,u}-F_{\Delta,v}||_{L_1(\Omega_\Delta)}&=
\int_{\Lambda_\Delta}\int_{\Theta}|F_{\Delta,u}(\lambda,\theta)-F_{\Delta,v}(\lambda,\theta)|
d\theta d\lambda\\
&\ge\int_{\Lambda_\Delta}\int_{\Psi(\lambda)}|F_{\Delta,u}(\lambda,\theta)-F_{\Delta,v}(\lambda,\theta)|d\theta
d\lambda\\&=\int_{\Lambda_\Delta}\int_{\Psi(\lambda)}(|F_{\Delta,u}(\lambda,\theta)|+|F_{\Delta,v}(\lambda,\theta)|)d\theta
d\lambda\\&(\hbox{observe that the integrand does not depend on }
\theta)
\\&=\frac12\int_{\Lambda_\Delta}(|F_{\Delta,u}(\lambda,\theta)|+|F_{\Delta,v}(\lambda,\theta)|)d\lambda
\\&\ge\ep_r\Delta.\end{split}\end{equation}

We apply this construction with $\Delta=2,4\dots,2^i,\dots$. Let
$\Omega=\cup_{i=1}^\infty\Omega_{2^i}$ be the disjoint union of
the measure spaces $\Omega_{2^i}$. Let $O$ be one of the vertices
of $X$. We introduce an embedding $\varphi:X\to L_1(\Omega)$ by
\[\varphi(v)|_{\Omega_{2^i}}=\left(\frac23\right)^i\left(F_{2^i,v}(\omega)-F_{2^i,O}(\omega)\right).\]

To complete the proof of the theorem it remains to show that
$\varphi$ is a well-defined mapping and that it is a coarse
embedding.
\medskip

Since $f_{\Delta,\omega}(u)$ are $1$-Lipschitz (as functions of
$u$) real-valued functions, the mappings
$\varphi_i(v):=F_{2^i,v}\in L_1(\Omega_{2^i})$ are also
$1$-Lipschitz. Therefore
$||\varphi_i(v)-\varphi_i(O)||_{L_1(\Omega_{2^i})}\le d(O,v)$ and
$\varphi(v)\in L_1(\Omega)$.
\medskip

To show that $\varphi$ is a coarse embedding it suffices to
establish the following two inequalities:
\begin{equation}\label{E:3-Lip}
||\varphi(u)-\varphi(v)||_{L_1(\Omega)}\le 3d(u,v),
\end{equation}

\begin{equation}\label{E:below}
d(u,v)\ge d_{2^i,n}\Rightarrow
||\varphi(u)-\varphi(v)||_{L_1(\Omega)}
\ge\left(\frac43\right)^i\ep_r.
\end{equation}

The inequality \eqref{E:3-Lip} is an immediate consequence of the
fact that $\varphi_i$ are $1$-Lipschitz:
\[||\varphi(u)-\varphi(v)||_{L_1(\Omega)}=\sum_{i=0}^\infty\left(\frac23\right)^i||\varphi_i(u)-
\varphi_i(v)||_{L_1(\Omega_{2^i})} \le
d(u,v)\sum_{i=0}^\infty\left(\frac23\right)^i=3d(u,v).\]

If $d(u,v)\ge d_{2^i,n}$, we apply the inequality \eqref{E:pad}
and get
$$||\varphi(u)-\varphi(v)||_{L_1(\Omega)}\ge\left(\frac23\right)^i||\varphi_i(u)-\varphi_i(v)||_{L_1(\Omega_{2^i})}
\ge\left(\frac23\right)^i2^i\ep_r= \left(\frac43\right)^i\ep_r.$$
\end{proof}

\end{large}

\begin{small}

\end{small}

\end{document}